\numberwithin{equation}{section}
\begin{document}

\newtheorem{theorem}{Theorem}[section]
\newtheorem{lemma}[theorem]{Lemma}
\newtheorem{proposition}[theorem]{Proposition}
\newtheorem{corollary}[theorem]{Corollary}

\theoremstyle{definition}
\newtheorem{definition}[theorem]{Definition}
\newtheorem{example}[theorem]{Example}

\theoremstyle{remark}
\newtheorem{remark}[theorem]{Remark}

\newenvironment{magarray}[1]
{\renewcommand\arraystretch{#1}}
{\renewcommand\arraystretch{1}}

\newcommand{\mapor}[1]{\smash{\mathop{\longrightarrow}\limits^{#1}}}
\newcommand{\mapin}[1]{\smash{\mathop{\hookrightarrow}\limits^{#1}}}
\newcommand{\mapver}[1]{\Big\downarrow
\rlap{$\vcenter{\hbox{$\scriptstyle#1$}}$}}

\newcommand{\Set}{\mathbf{Set}}
\newcommand{\Art}{\mathbf{Art}}

\renewcommand{\bar}{\overline}
\newcommand{\de}{\partial}
\newcommand{\debar}{{\overline{\partial}}}
\newcommand{\per}{\!\cdot\!}
\newcommand{\Oh}{\mathcal{O}}
\newcommand{\sA}{\mathcal{A}}
\newcommand{\sB}{\mathcal{B}}
\newcommand{\sC}{\mathcal{C}}
\newcommand{\sF}{\mathcal{F}}
\newcommand{\sG}{\mathcal{G}}
\newcommand{\sH}{\mathcal{H}}
\newcommand{\sI}{\mathcal{I}}
\newcommand{\sL}{\mathcal{L}}
\newcommand{\sM}{\mathcal{M}}
\newcommand{\sP}{\mathcal{P}}
\newcommand{\sU}{\mathcal{U}}
\newcommand{\sV}{\mathcal{V}}
\newcommand{\sX}{\mathcal{X}}
\newcommand{\sY}{\mathcal{Y}}

\newcommand{\Aut}{\operatorname{Aut}}
\newcommand{\Mor}{\operatorname{Mor}}
\newcommand{\Def}{\operatorname{Def}}
\newcommand{\Hom}{\operatorname{Hom}}
\newcommand{\HOM}{\operatorname{\mathcal H}\!\!om}
\newcommand{\DER}{\operatorname{\mathcal D}\!er}
\newcommand{\Der}{\operatorname{Der}}
\newcommand{\End}{{\operatorname{End}}}
\newcommand{\END}{\operatorname{\mathcal E}\!\!nd}
\newcommand{\Image}{\operatorname{Im}}
\newcommand{\coker}{\operatorname{coker}}
\newcommand{\tot}{\operatorname{tot}}
\newcommand{\Diff}{\operatorname{Diff}}
\newcommand{\Coder}{\operatorname{Coder}}
\newcommand{\MC}{\operatorname{MC}}
\newcommand{\TW}{\operatorname{TW}}

\renewcommand{\Hat}[1]{\widehat{#1}}
\newcommand{\dual}{^{\vee}}
\newcommand{\desude}[2]{\dfrac{\de #1}{\de #2}}

\newcommand{\A}{\mathbb{A}}
\newcommand{\N}{\mathbb{N}}
\newcommand{\R}{\mathbb{R}}
\newcommand{\Z}{\mathbb{Z}}
\newcommand{\C}{\mathbb{C}}
\renewcommand{\L}{\mathbb{L}}
\newcommand{\proj}{\mathbb{P}}
\newcommand{\K}{\mathbb{K}\,}

\newcommand{\contr}{{\mspace{1mu}\lrcorner\mspace{1.5mu}}}

\newcommand{\bi}{\boldsymbol{i}}
\newcommand{\bl}{\boldsymbol{l}}


\title[Formality of Koszul brackets]{Formality of Koszul brackets and deformations of holomorphic Poisson manifolds}
\author{Domenico Fiorenza}
\address{\newline
Universit\`a degli studi di Roma ``La Sapienza'',\hfill\newline
Dipartimento di Matematica \lq\lq Guido
Castelnuovo\rq\rq,\hfill\newline
P.le Aldo Moro 5,
I-00185 Roma, Italy.}
\email{fiorenza@mat.uniroma1.it}
\urladdr{www.mat.uniroma1.it/people/fiorenza/}
\author{Marco Manetti}
\email{manetti@mat.uniroma1.it}
\urladdr{www.mat.uniroma1.it/people/manetti/}

\date{May 14, 2012}

\begin{abstract} We show that if a generator of a differential Gerstenhaber algebra satisfies certain Cartan-type identities, then the corresponding Lie bracket is formal.
Geometric examples include the shifted  de Rham complex of a Poisson manifold and
the subcomplex of differential forms on a symplectic manifold vanishing on a Lagrangian submanifold, endowed with the Koszul bracket.
As a corollary we generalize a recent 
result by Hitchin on deformations of holomorphic Poisson
manifolds.
\end{abstract}

\maketitle

\section*{Introduction}
In the paper \cite{koszul}, Jean-Louis Koszul considered a graded commutative algebra $A=\oplus_{p\in\Z} A^p$ with unit $1\in A^0$ and a differential operator $\bl\colon A\to A$ of second order, of odd degree $k$, such that $\bl(1)=0$ and $\bl^2=0$. Then he proved that the bracket generated by $\bl$,
\[ [,]_{\bl}\colon A^p\times A^q\to A^{p+q+k},\quad
[a,b]_{\bl}:= (-1)^{p}(\bl(ab)-\bl(a)b)-a\bl(b),\]
satisfies both Poisson and Jacobi identities and then induces on $A$ what is nowadays called a  structure of Batalin-\!Vilkovisky algebra \cite{CFL,getzler94}.

Koszul's construction applies in particular when $A$ is the de Rham complex of a differentiable manifold and $\bl=\bl_{\pi}$ is the Lie derivative with respect to  a Poisson bivector 
$\pi$; the degree of $\bl_{\pi}$ is $-1$ and then it induces in particular a structure of differential graded Lie  algebra (DGLA) on the de Rham complex, with degrees shifted by 1.

However, in this case we have by Cartan formulas that 
$\bl_{\pi}=[\bi_{\pi},d]$ and $[\bl_{\pi},\bi_{\pi}]=0$, where $d$ is the de Rham differential and $\bi_{\pi}$ is the  interior product by $\pi$. As a consequence of this fact, Sharygin and Talalaev \cite{ST08} obtain that such a differential graded Lie algebra
is quasi-isomorphic to an abelian DGLA. In this paper we will reobtain Sharygin and Talalaev formality theorem as a particular case of a more general statement. More precisely, we will show that if $(A,d)$ is a differential graded commutative algebra endowed with a degree $-2k$ second order differential operator $\bi:A\to A$ such that $\bi(1)=0$, then $(A,d)$ carries a natural Gerstenhaber algebra structure whose underlying DGLA is formal as soon as the differential operator $\bl=[\bi,d]$ is such that $[\bl,\bi]$ is a second order differential operator. Moreover our proof also shows that the same conclusion holds for certain subcomplexes of $A$;  a remarkable example is the subcomplex of differential forms on a symplectic manifolds vanishing on a Lagrangian submanifold.  

The formality of the shifted de Rham complex is particularly relevant and useful in formal deformation theory, in view of the fact that  quasi-isomorphic DGLAs have isomorphic associated deformation functors.  As an application  we obtain an extension, and a new proof, of a recent result by Hitchin \cite{hitchin} on  deformations of holomorphic Poisson manifolds: let $\pi$ be a holomorphic Poisson structure on a compact complex  
manifold $X$, and let $\pi^{\#}\colon \Omega^1_X\to \Theta_X$ be the corresponding anchor map;  if the natural map $H^2_{dR}(X,\mathbb{C})\to H^2(X,\Oh_X)$  is surjective,
then for every closed $(1,1)$ form $\omega$, the class $[\pi^{\#}(\omega)]\in H^1(X,\Theta_X)$ is tangent to a deformation of $X$ over a smooth basis.

It's a pleasure for us
to thank Marco Gualtieri, Donatella Iacono, Rita Pardini, Bruno Vallette and Marco Zambon for useful and interesting discussions on the topics of this paper. We are indebted with Florian Sch\"{a}tz for pointing out  our attention to the paper \cite{ST08}.

\subsection*{Glossary and background}

We assume that the reader is familiar with the basic theory of differential graded Lie algebras (DGLA), $L_{\infty}$-algebras and related notions 
(Maurer-Cartan equation, gauge action etc.). For an introduction to these topics we refer to \cite{FMM,LodayVallette,manRENDICONTI,FreZamb} and references therein. 
We shall say that a DGLA is homotopy abelian if it is isomorphic to an abelian DGLA in the homotopy category, i.e. if it is quasi-isomorphic to an abelian differential graded Lie algebra.

For the clarity of exposition, we will distinguish the various Lie brackets appearing in this paper according to the following notation:

\begin{itemize}
\item[{$[\;,\;]$}:] The graded commutator bracket, i.e. $[a,b]=ab-(-1)^{\bar{a}\;\bar{b}}ba$, where $\bar{a}$ is the degree of a homogeneous element $a$;
\item[{$[\;,\;]_{SN}$}:] The Schouten-Nijenhuis bracket on polyvector fields; 
\item[{$[\;,\;]_{\pi}$}:] The Koszul bracket associated to a tangent bivector field $\pi$. 
\end{itemize}

Given a graded vector space $V$  we will denote by $V[k]$ the 
$k$-fold desuspension of $V$:  $V[k]^i=V^{k+i}$.

\bigskip
\section{Review of Koszul brackets}

Let $X$ be a smooth differentiable manifold, $T_X$ its tangent bundle and $(A_X,d)=
(\oplus_p \Gamma(\bigwedge^p T_X^*),d)$ its de Rham complex.
For every $\eta\in \Gamma(\bigwedge^p T_X)$ we denote by 
\[ \bi_\eta\colon A_X^q\to A_X^{q-p},\qquad \bi_{\eta}(\alpha)=\eta\contr\alpha,\qquad \text{ the interior product by }\eta ,\]
\[ \bl_\eta\colon A_X^q\to A_X^{q-p+1},\qquad \bl_\eta=[\bi_{\eta},d],\qquad \text{ the Lie derivative}.\]
Recall that for $p=1$ the operator $\bi_{\eta}$ is a derivation of $A_X$ and 
$\bi_{\eta\wedge\mu}=\bi_{\eta}\circ\bi_{\mu}$.

Everyone is familiar with Cartan's formulas \cite{cartan50,ginzburg,periods}:
\begin{equation*}
[\bl_{\eta},d]=0,\quad [\bi_{\eta},\bi_{\mu}]=0,\quad [\bl_{\eta},\bi_{\mu}]=\bi_{[\eta,\mu]_{SN}},\quad 
[\bl_{\eta},\bl_{\mu}]=\bl_{[\eta,\mu]_{SN}},
\end{equation*}
where $[,]_{SN}$ is the Schouten-Nijenhuis bracket on polyvector fields.

\begin{definition}[{\cite[pag. 266]{koszul}}]\label{def.koszulmagribracket}
The \emph{Koszul bracket} associated to a  tangent bivector field 
$\pi\in \Gamma(\bigwedge^2 T_X)$ is the bilinear map 
$[,]_{\pi}\colon \bigwedge^2A_X[1]\to A_X[1]$ defined as
\[
[\alpha,\beta]_{\pi}=
(-1)^{p}
(\bl_{\pi}(\alpha\wedge\beta)-\bl_{\pi}(\alpha)\wedge\beta)-\alpha\wedge\bl_{\pi}(\beta),\qquad \alpha\in A_X^p,\beta\in A_X.\]
\end{definition}

Using the relation $\bl_{\pi}=[\bi_{\pi},d]$ we may write, for $\alpha\in A_X^p$, $\beta\in A_X$,
\[
[\alpha,\beta]_{\pi}=(-1)^{p}(\bi_\pi d(\alpha\wedge\beta)-d\bi_{\pi}(\alpha\wedge\beta)
+d(\bi_{\pi}(\alpha))\wedge\beta-
\bi_\pi(d\alpha)\wedge\beta)-\alpha\wedge\bi_{\pi}(d\beta)-\alpha\wedge d(\bi_{\pi}(\beta)),\]
and therefore the Koszul bracket of two closed forms is exact.

The restriction of the bracket to $A_X^1=\Gamma(T^*_X)$, also known as the \emph{Magri bracket} \cite{KSM90,KS08}, can be conveniently described in terms of 
the morphism of vector bundles
\[ \pi^{\#}\colon T_X^*\to T_X,\]
called \emph{anchor map}, defined by the formula 
\[ \bi_{\pi^{\#}(\alpha)}(\beta)=\bi_{\pi}(\alpha\wedge \beta),\qquad \forall\; \alpha,\beta\in T_X^*.\]

In fact it is well known, and in any case easy to prove, that for $\alpha,\beta\in A_X^1$ we have
\[ \bi_{\pi^{\#}(\alpha)}(d\beta)=\bi_\pi(\alpha\wedge d\beta)-\alpha\wedge\bi_{\pi}(d\beta)\]
and (see e.g. \cite{KSM90,Xu97,ginzburg,KS08})
\[ [\alpha,\beta]_{\pi}=\bl_{\pi^{\#}(\alpha)}(\beta)-
\bl_{\pi^{\#}(\beta)}(\alpha)-d\bi_{\pi}(\alpha\wedge \beta)=
\bi_{\pi^{\#}(\alpha)}(d\beta)-\bi_{\pi^{\#}(\beta)}(d\alpha)+d\bi_{\pi}(\alpha\wedge \beta).
\]

Assume now that $\pi\in \Gamma(\bigwedge^2 T_X)$ is a \emph{Poisson structure}: 
this means that $[\pi,\pi]_{SN}=0$. By Cartan formulas this implies  that
\[[\bl_{\pi},\bi_{\pi}]=0,\qquad \bl_{\pi}^2=\frac{1}{2}[\bl_{\pi},\bl_{\pi}]=
\frac{1}{2}[[\bl_{\pi},\bi_{\pi}],d]=0,\]
and these conditions ensure (see e.g. \cite{koszul,brylinski,Xu97} and \cite[Lemma 6.3.4]{ginzburg}) that the Koszul bracket satisfies Jacobi identity and therefore that the triple $(A_X,d,[,]_{\pi})$ is a differential Gerstenhaber algebra with an exact generator $\bl_{\pi}$. 
These properties will be also reproved in this paper as a byproduct of our computations.

Another important fact, that we will use in Section~\ref{sec.deformationpoisson}, is that 
the Poisson structure $\pi$ gives a Lie algebroid structure on the cotangent bundle $T_X^*$ \cite{Xu97,ginzburg,KS08}; in particular the anchor map $\pi^{\#}$ is a Lie morphism between the sheaf of 1-forms, endowed with Koszul-Magri bracket, and the sheaf of tangent vector fields \cite[equation 3.3]{koszul}.

Having in mind application to deformation theory, in this paper we are mainly interested to the differential graded Lie algebra $(A_X[1],d,[,]_\pi)$. In particular, we will be concerned with extensions and generalizations the following formality theorem.

\begin{theorem}[Sharygin-Talalaev {\cite{ST08}}]\label{theorem.magri-koszul-is-formal} In the notation above, 
if $\pi$ is a  Poisson structure on $X$,  then 
$(A_X[1],d, [,]_{\pi})$ is a \textbf{formal} differential graded Lie algebra.
\end{theorem}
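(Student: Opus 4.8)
The plan is to exhibit an explicit quasi-isomorphism between the DGLA $(A_X[1], d, [,]_\pi)$ and an abelian DGLA, using the extra operator $\bi_\pi$ and the Cartan-type identities that hold in the Poisson case. The key structural observation is that, since $\bl_\pi = [\bi_\pi, d]$, the generator of the Koszul bracket is not merely closed but \emph{exact} as a differential operator. This suggests that $\bi_\pi$ should act as a contracting homotopy at the level of the $L_\infty$ (or DGLA) morphisms, trivializing the bracket up to homotopy.

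\medskip

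First I would set up the formal machinery: interpret the Koszul bracket as the binary operation generated by $\bl_\pi$ via the classical Koszul formula, and record the Poisson identities $[\bl_\pi, \bi_\pi] = 0$ and $\bl_\pi^2 = 0$ proved in the excerpt. The abelian DGLA I would target is $(A_X[1], d, 0)$ — the same complex with the zero bracket. To produce the quasi-isomorphism, rather than constructing a strict DGLA morphism (which likely will not exist), I would build an $L_\infty$-morphism, since formality is a statement in the homotopy category and $L_\infty$-morphisms with invertible linear part are precisely the quasi-isomorphisms one needs. The natural candidate is a morphism whose Taylor coefficients are built from iterated applications of $\bi_\pi$: concretely, one tries an $L_\infty$-morphism $F\colon (A_X[1], d, 0) \to (A_X[1], d, [,]_\pi)$ (or its inverse direction) whose first component is the identity and whose higher components $F_n$ are suitable multiples of $\bi_\pi$ composed with the wedge product, reflecting that $\bi_\pi$ is the ``primitive'' of the bracket.

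\medskip

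The main computational step is to verify the $L_\infty$-morphism equations, which amount to a sequence of identities expressing the compatibility of the $F_n$ with $d$ and with the Koszul bracket. Here the second-order nature of $\bi_\pi$ is essential: because $\bi_\pi$ is a second-order differential operator annihilating $1$, the failure of $\bi_\pi$ to be a derivation is measured precisely by the bracket $[,]_\pi$, and this is what makes the homotopy-trivialization algebraically possible. I would exploit the identity $[\alpha,\beta]_\pi = (-1)^p(\bl_\pi(\alpha\wedge\beta) - \bl_\pi\alpha\wedge\beta) - \alpha\wedge\bl_\pi\beta$ to rewrite bracket terms as deviations of $\bl_\pi$ from being a derivation, then substitute $\bl_\pi = [\bi_\pi, d]$ to convert everything into expressions involving $\bi_\pi$ and $d$ alone. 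The cancellations should then follow from $\bl_\pi^2 = 0$ and $[\bl_\pi,\bi_\pi]=0$.

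\medskip

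The hard part, and the step where I expect the real obstacle, is getting the higher Taylor coefficients $F_n$ right and proving that the resulting $L_\infty$-morphism is well-defined to all orders, not just in low degree. The second-order (rather than first-order) nature of $\bi_\pi$ means $\bi_\pi$ does not interact with products by a simple Leibniz rule, so the combinatorics controlling how $\bi_\pi$ distributes over $n$-fold wedge products will be delicate; keeping track of the signs and multinomial coefficients in the $L_\infty$-relations is where errors are most likely to creep in. I suspect the cleanest route is to package the construction using the description of second-order operators and the associated bracket at the level of the Chevalley--Eilenberg / bar construction, so that the identity $\bl_\pi=[\bi_\pi,d]$ becomes a single homological statement (a homotopy between two coderivations) rather than an infinite family of explicit formulas to be checked by hand. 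Establishing that abstract homotopy, and then reading off that it induces the desired quasi-isomorphism to the abelian model, is the crux of the argument.
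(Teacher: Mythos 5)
Your strategy is the paper's: conjugate the bare differential $d$ on the bar construction by an automorphism built out of $\bi_\pi$, so that the Koszul bracket appears as the transported $L_\infty$-structure and the automorphism itself is the desired $L_\infty$-isomorphism with the abelian model $(A_X[1],d,0)$. But the proposal stops exactly at what you yourself call the crux, and that is where all the content lives. As written there is a gap: ``Taylor coefficients built from iterated applications of $\bi_\pi$'' is not yet a definition, and your worry about signs and multinomial coefficients is a symptom of not having found the mechanism that makes those combinatorics trivial.

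The paper's resolution has two ingredients you are missing. First, the correct input is not $\bi_\pi$ itself but its \emph{binary} defect from being a derivation, $R(a,b)=\bi_\pi(a\wedge b)-\bi_\pi(a)\wedge b-a\wedge\bi_\pi(b)$, regarded as a degree-zero element of $D_1(A_X[2])$, i.e.\ as a coderivation of $\overline{S(A_X[2])}$. Then $e^R$ is automatically a coalgebra automorphism with inverse $e^{-R}$: its higher components are forced by the coderivation property (the ``forest formula''), so there is nothing to guess and no $L_\infty$-relations to check by hand. One has $e^Rd\,e^{-R}=d+e^R\ast 0=d+\sum_{n\ge 0}\frac{(\mathrm{ad}_R)^n}{(n+1)!}[R,d]$. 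Second, the decisive identity is $[R,Q]=0$ where $Q=[R,d]$; note that $Q$ is precisely the Koszul bracket up to the d\'ecalage sign, since $\bl_\pi=[\bi_\pi,d]$. This single identity replaces your infinite family of morphism equations, because it truncates the series above to $e^Rd\,e^{-R}=d+Q$. It is here, and only here, that the Poisson condition enters: writing $R=[\bi_\pi,\mu_1]$ and $Q=[\bl_\pi,\mu_1]$, the graded Jacobi identity together with the seven-term (second-order) identities for $\bi_\pi$ and $\bl_\pi$ gives $2[Q,R]=[[[\bl_\pi,\bi_\pi],\mu_1],\mu_1]-[[\bl_\pi,\bi_\pi],\mu_2]$, which vanishes because $[\bl_\pi,\bi_\pi]=\bi_{[\pi,\pi]_{SN}}=0$. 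Without identifying $R$ and proving $[R,[R,d]]=0$, the gauge series does not visibly terminate and the ``delicate combinatorics'' you fear would indeed remain an obstruction; with them, the proof is three lines.
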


We recall that a differential graded Lie algebra is called formal if it is quasi-isomorphic to its cohomology. Since the Koszul bracket is trivial in the de Rham cohomology of $X$, the formality of the DGLA $(A_X[1],d, [,]_{\pi})$  is equivalent to claiming that it is quasi-isomorphic to an abelian differential graded Lie algebra.

We will recover Sharygin-Talalaev formality theorem in Section 4, as a corollary of a more general statement involving a differential graded commutative algebra $(A,d)$ equipped with an even degree second order differential operator $\bi\colon A\to A$ with $\bi(1)=0$ and such that $[[\bi,d],\bi]$ is a second order differential operator, too.

\bigskip

\section{A simple formality criterion for DGLA}

Given a graded vector space $V$ on a characteristic $0$ field $\K$, we will denote by    
$\overline{S(V)}=\oplus_{i\ge 1}\bigodot^i V$ the graded symmetric coalgebra cogenerated by $V$. 
Denoting by 
\[D(V)=\Hom^*_{\K}(\overline{S(V)},V)=\prod_{i\ge 0} D_i(V),\quad\text{where}\quad  
D_i(V)=\Hom^*_{\K}({\bigodot}^{i+1} V,V),\]
the composition on the right with the natural projection 
$\overline{S(V)}\to \bigodot^{i+1} V$ give an inclusion $D_i(V)\subset D(V)$, while  the composition on the left with the  natural projection 
$\overline{S(V)}\to V$ gives an isomorphism of graded vector spaces (see e.g. \cite{K,manRENDICONTI})
\[ \Coder^*_{\K}(\overline{S(V)})\xrightarrow{\sim} D(V).\]
By the inverse isomorphism, an element $g$ in $D_n(V)$ corresponds to the coderivation
\[
a_0\odot a_1\odot\cdots \odot a_{n+m}\mapsto \sum_{\sigma}\varepsilon(\sigma)
g(a_{\sigma(0)},\ldots,a_{\sigma(m)})\odot a_{\sigma(m+1)}\odot\cdots\odot a_{\sigma(n+m)},
\] 
where $\varepsilon(\sigma)$ is the Koszul sign and the sum is carried over all the $(m+1,n)$-unshuffles $\sigma$. The graded vector space $\Coder^*_{\K}(\overline{S(V)})$ is a linear subspace of the graded associative algebra $\End^*_{\K}(\overline{S(V)})$ of linear endomorphisms of $\overline{S(V)}$, which is closed under the graded commutator bracket; hence $D(V)$ inherits a natural graded Lie algebra structure.
A simple computation shows that
for $f\in D_n(V)$ and $g\in D_m(V)$ we have
\[ [f,g]=f\bullet g-(-1)^{\bar{f}\;\bar{g}}g\bullet f\in D_{n+m}(V)\]
where 
\[ f\bullet g(a_0,\ldots,a_{n+m})=\sum_{\sigma}\varepsilon(\sigma)
f(g(a_{\sigma(0)},\ldots,a_{\sigma(m)}),a_{\sigma(m+1)},\ldots,a_{\sigma(n+m)}),\]
To prevent a possible misleading, let us explicitly remark that the pre-Lie operation $\bullet$ on $D(V)$ is \emph{not} associative.
Notice that the induced  bracket on the graded Lie subalgebra $D_0(V)$ is the same as the graded commutator bracket on $\Hom^*_{\K}(V,V)$.

\bigskip

Recall that $L_\infty$ structures on the graded vector space $V[-1]$ are the degree 1 elements  $\partial$ in $D(V)$   such that $[\partial,\partial]=0$; following \cite{K}, an  $L_{\infty}$ structure $\partial$ is called \emph{linear}  if $\partial\in  D_0(V)$.

If $(V,d)$ is a chain complex, then we can look at $(V,d)$ as a linear $L_\infty$-algebra, and so at $d$ as an $L_\infty$ structure on $V$. Using $d$ (seen as a coderivation) to ``translate the origin'' 
in $\Coder^*_{\K}(\overline{S(V)})$, we have that $L_\infty$ structures on $V$ can be seen as the degree 1 coderivations $\xi$ on $\overline{S(V)}$ such that $(d+\xi)^2=0$. This is conveniently rewritten as the Maurer-Cartan equation for the DGLA $\Coder^*_{\K}(\overline{S(V)})$:
\[
\delta \xi+\frac{1}{2}[\xi,\xi]=0,
\]
where $\delta$ is the adjoint of $d$ seen as a coderivation.

For any degree zero coderivation $R\in D_{>0}(V)=\prod_{i>0}D_i(V)$, the exponential $e^R$ is a well defined element in the graded associative algebra $\End^*_{\K}(\overline{S(V)})$, and it is immediate to see that, since $R$ is a coderivation, $e^R$ is actually a graded coalgebra automorphism of $\overline{S(V)}$ with inverse $e^{-R}$. Moreover, in the graded  associative algebra $\End^*_{\K}(\overline{S(V)})$ we have,  for any solution $\xi$ of the Maurer-Cartan equation in $D(V)$, 
\[
e^{R}(d+\xi)e^{-R}=d+e^R*\xi,
\]
where $\ast$ denote the gauge action in $D(V)$ (see, e.g. \cite{manettiseattle}):
\[ e^R*\xi=\xi+\sum_{n=0}^\infty\frac{(\mathrm{ad}_R)^n}{(n+1)!}([R,\xi]+[R,d]).\]
In particular, for any degree zero coderivation $R\in D_{>0}(V)$
the coderivation $\xi_R=e^R*0$ defines an $L_\infty$-algebra structure isomorphic (via $e^R$) to a linear one: $e^R d\, e^{-R}=d+\xi_R$.

\begin{remark}
The isomorphism $e^R$ can be conveniently written in terms of an operadic ``forest formula''. Namely, the $\Hom_{\K}( \bigodot^{m}V, \bigodot^{n}V)$-component of $e^R$ can be written as a weighted sum over oriented forests with $n$ roots and $m$ leaves, and whose internal $k$-valent vertices are decorated by the $\Hom_{\K}( \bigodot^{k}V, V)$-component of $R$. As usual in this kind of formulas, the weights are given by the (inverse of the) cardinality of the automorphism groups of the forests.
\end{remark}

\begin{theorem}\label{thm.main-theorem}
Let $(A,d)$ be a chain complex, let $R\in \mathrm{Hom}^{-2k}_{\K}(A\odot A,A)$ considered as a degree zero element of $D(A[2k])$, let $Q=[R,d]$, and let
\begin{equation}\label{equ.brackettone}
[a,b]_Q=(-1)^{\bar{a}}Q(a,b)
\end{equation}
the degree zero bracket on $A[2k-1]$ induced by $Q$ via decalage.
 If $[R,Q]=0$, then the bracket~\eqref{equ.brackettone} gives a formal homotopy abelian DGLA structure on 
 $(A[2k-1],d)$. More precisely the exponential of the coderivation $R$ is an  
$L_{\infty}$-isomorphism between the DGLA
$(A[2k-1],d,0)$ and $(A[2k-1],d,[,]_Q)$.
\end{theorem}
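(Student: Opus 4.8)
The plan is to work entirely inside the graded Lie algebra $D(V)$ with $V=A[2k]$ and to show that the single hypothesis $[R,Q]=0$ forces the gauge-action computation to collapse to its leading term. First I would fix the bookkeeping: under the décalage, $R$ is a degree zero coderivation lying in $D_1(V)$, the differential $d$ is the degree $1$ element of $D_0(V)$ giving the abelian (linear) $L_\infty$ structure, and hence $Q=[R,d]$ is a degree $1$ element of $D_1(V)$, i.e. a genuine $2$-ary operation with no higher terms. Thus the abelian DGLA $(A[2k-1],d,0)$ corresponds to the codifferential $d$, while the DGLA $(A[2k-1],d,[\,,\,]_Q)$ corresponds to $d+Q$, the bracket $[\,,\,]_Q$ being read off from $Q$ precisely via the décalage sign in~\eqref{equ.brackettone}.

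The heart of the argument is the evaluation of the gauge action $e^R*0$. Applying the formula recalled above,
\[
e^R*0=\sum_{n=0}^{\infty}\frac{(\mathrm{ad}_R)^n}{(n+1)!}\bigl([R,0]+[R,d]\bigr)
=\sum_{n=0}^{\infty}\frac{(\mathrm{ad}_R)^n}{(n+1)!}\,Q,
\]
and since $\mathrm{ad}_R(Q)=[R,Q]=0$ by hypothesis, every term with $n\ge 1$ vanishes and only the $n=0$ summand survives, giving $e^R*0=Q$. Consequently $e^R\,d\,e^{-R}=d+e^R*0=d+Q$, which is exactly the assertion that $e^R$ is an $L_\infty$-isomorphism from $(A[2k-1],d,0)$ to $(A[2k-1],d,[\,,\,]_Q)$. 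As a free bonus, since conjugation by the coalgebra automorphism $e^R$ preserves codifferentials, $(d+Q)^2=0$; that is, $Q$ is a Maurer-Cartan element, which reproves that $[\,,\,]_Q$ satisfies the Jacobi identity, as the authors anticipate.

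Finally I would translate this isomorphism into the two adjectives appearing in the statement. Being $L_\infty$-isomorphic, the DGLA $(A[2k-1],d,[\,,\,]_Q)$ is in particular quasi-isomorphic to the abelian DGLA $(A[2k-1],d,0)$, which is exactly the definition of homotopy abelian. For formality, the induced isomorphism on cohomology carries the (trivially zero) bracket of the abelian model to the bracket induced on $H^*(A[2k-1],d)$, which is therefore zero; hence the DGLA is quasi-isomorphic to its own cohomology endowed with the vanishing bracket, i.e. formal.

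I expect the only genuine subtlety, rather than a true obstacle, to be the degree and sign bookkeeping involved in placing $R$, $d$, and $Q$ in the correct components $D_i(V)$ and in verifying that $\mathrm{ad}_R$ annihilates $Q$ rather than producing a spurious lower-arity contribution. The entire conceptual content is concentrated in the one-line collapse $e^R*0=Q$ made possible by $[R,Q]=0$; everything else is formal manipulation in $D(V)$ and in the dictionary between codifferentials and $L_\infty$ structures.
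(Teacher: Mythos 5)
Your proposal is correct and follows essentially the same route as the paper's own proof: both evaluate the gauge action $e^R*0$ via the formula $e^Rd\,e^{-R}=d+e^R*0$, observe that $[R,Q]=0$ collapses the series to the single term $Q$, and conclude that $e^R$ is an $L_\infty$-isomorphism from the abelian structure to $(A[2k-1],d,[\,,\,]_Q)$, whence homotopy abelianness and formality. Your added observations (that $(d+Q)^2=0$ recovers the Jacobi identity, and the explicit passage from homotopy abelian to formal) are consistent with remarks the paper makes elsewhere.
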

\begin{proof}
Since $R$ is a degree zero element of $D_{>0}(A[2k])$ and $[R,[R,d]]=0$, we have
\[
e^R d\, e^{-R}=d+e^R\ast 0=d+\sum_{n=0}^\infty\frac{(\mathrm{ad}_R)^n}{(n+1)!}([R,d])=d+Q.
\]
Hence the two DGLA $(A[2k-1],d,0)$ and  $(A[2k-1],d,[,]_Q)$ have isomorphic Bar constructions, i.e. they are isomorphic as $L_{\infty}$-algebras. Therefore,   
according to the Bar-Cobar resolution \cite{LodayVallette}, they are quasi-isomorphic as differential graded Lie algebras.
\end{proof}

The above theorem is one of the possible formality criteria and find
application only in some particular cases, for instance for the Koszul brackets. 
The reader may find similar results in \cite{GPR11,Vallette} and \cite[Thm. 9.13]{manRENDICONTI}.

\section{Differential operators on  graded commutative algebras}

The theory of differential operators on  commutative rings (see e.g. \cite{coutinho,ginzburg}) extends without difficulties to the graded case. 
Let $A=\oplus A^i$ be a graded commutative algebra with unit $1\in A^0$ over a field $\K$ of characteristic 0. Every $a\in A$ is also considered as an element of $\Hom^*_{\K}(A,A)$ acting by left multiplication: 
\[ a\colon A\to A,\qquad a(b)=ab.\]
Denote by $[,]$ the graded commutator on $\Hom_{\K}^*(A,A)$ and by
\[\Diff_{k}(A)=\bigoplus_{n\in\Z}\Diff^n_{k}(A)\subset \Hom_{\K}^*(A,A)\] 
the graded subspace of differential operators of order $\le k$:  recall that $\Diff_{k}(A)$ is defined recursively by setting 
$\Diff_k(A)=0$ for $k<0$ and 
\[\Diff_k(A)=\{f\in \Hom_{\K}^*(A,A)\mid [f,a]\in \Diff_{k-1}(A)\; \forall a\in A\}\]
for $k\ge 0$. 

Moreover
\[ \Diff_k(A)\Diff_h(A)\subset\Diff_{h+k}(A),\qquad
[\Diff_k(A),\Diff_h(A)]\subset\Diff_{h+k-1}(A)\]
and therefore the space $\Diff(A)=\bigcup_{k}\Diff_k(A)$ of differential operators is  a Lie subalgebra of $\Hom_{\K}^*(A,A)$.

The differential operator of order $\le k$ are stable under scalar extension: if $f\in \Diff_k(A)$ and $B$ is a graded commutative algebra, then $f\otimes Id\in \Diff_k(A\otimes_{\K} B)$.

For a fixed even integer $2k$, let $V=A[2k]$, i.e. $V=\oplus_{i\in\Z}V^i$  with $V^i=A^{i+2k}$. According to the natural isomorphism $D_0(V)=\Hom_{\K}^*(V,V)=\Hom_{\K}^*(A,A)$ we may consider $\Diff(A)$ as a Lie subalgebra of $D(V)$.

Also, for every $n\ge 0$ consider the multiplication map
\[ \mu_n\colon A^{\odot n+1}\to A,\qquad \mu_n(a_0\odot\cdots\odot a_n)=a_0a_1\cdots a_n.\]
We shall look at $\mu_n$ as a degree $2kn$ element in $D_n(V)$, for  every $n\ge 0$.

\begin{remark} The Lie subalgebra of $D(A)$ generated by the operators $\mu_n$, $n\ge 0$, is isomorphic to the Lie algebra of polynomial vector fields on the affine line vanishing in the origin, with 
$\mu_n$ corresponding to $\dfrac{-t^{n+1}}{(n+1)!}\dfrac{d\;}{dt}$.
\end{remark}

\begin{lemma} For a linear map $f\in \Hom_{\K}^*(A,A)$ the following conditions are equivalent:
\begin{enumerate} 

\item $f\in \Der^*_{\K}(A)$,

\item $[f,\mu_n]=0$ for every $n>0$,

\item $[f,\mu_1]=0$.
\end{enumerate}
\end{lemma}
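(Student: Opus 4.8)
The plan is to reduce all three conditions to a single explicit formula for the bracket $[f,\mu_n]$ evaluated on a tuple of homogeneous elements, and then read off the equivalences. For $f\in D_0(V)=\Hom^*_\K(A,A)$ and $\mu_n\in D_n(V)$ I would compute $[f,\mu_n](a_0,\ldots,a_n)$ directly in the coderivation picture on $\overline{S(V)}$ (equivalently, from the pre-Lie formula for $\bullet$). Since $\mu_n$ consumes all $n+1$ arguments at once, the term $f\bullet\mu_n$ contributes only $f(a_0\cdots a_n)$, whereas $\mu_n\bullet f$ contributes the sum over the positions at which $f$ is inserted. The key preliminary observation is that $\bar{\mu_n}=2kn$ is even, so the sign $(-1)^{\bar f\,\bar{\mu_n}}$ appearing in the commutator equals $1$; moreover the Koszul signs computed with the $V$-grading (in which the bracket on $D(V)$ lives) agree with those computed with the $A$-grading, since the two differ by the even shift $2k$. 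This yields
\[
[f,\mu_n](a_0,\ldots,a_n)=f(a_0\cdots a_n)-\sum_{i=0}^n(-1)^{\bar f(\bar a_0+\cdots+\bar a_{i-1})}a_0\cdots a_{i-1}\,f(a_i)\,a_{i+1}\cdots a_n,
\]
with all degrees taken in $A$.

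With this identity established, the three implications are short. The implication $(2)\Rightarrow(3)$ is immediate by specializing to $n=1$. For $(3)\Rightarrow(1)$ I would note that the $n=1$ instance of the display reads $[f,\mu_1](a,b)=f(ab)-f(a)b-(-1)^{\bar f\bar a}af(b)$, so $[f,\mu_1]=0$ says exactly that $f$ is a graded derivation of $A$. For $(1)\Rightarrow(2)$ I would argue by induction on $n$ that a graded derivation satisfies the extended Leibniz rule, i.e. that the right-hand side of the display vanishes for every $n>0$: writing $a_0\cdots a_n=(a_0\cdots a_{n-1})\,a_n$ and applying the derivation property to this product of two factors gives $f(a_0\cdots a_{n-1})\,a_n+(-1)^{\bar f(\bar a_0+\cdots+\bar a_{n-1})}(a_0\cdots a_{n-1})\,f(a_n)$, after which the inductive hypothesis applied to $f(a_0\cdots a_{n-1})$ recombines everything into the asserted sum, so that $[f,\mu_n]=0$.

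The only delicate point, and the step I would treat most carefully, is the sign bookkeeping in the initial computation of $[f,\mu_n]$. One must track the Koszul sign produced when the degree-$\bar f$ map $f$ is moved past the factors $a_0,\ldots,a_{i-1}$, and then confirm that the two computations of $\mu_n\bullet f$ — one via the coderivation on $\overline{S(V)}$, one via the pre-Lie formula together with graded commutativity of $A$ — produce the same overall sign $(-1)^{\bar f(\bar a_0+\cdots+\bar a_{i-1})}$; this reconciliation uses that moving the degree-$(\bar f+\bar a_i)$ element $f(a_i)$ back to position $i$ contributes a factor $(-1)^{(\bar f+\bar a_i)(\bar a_0+\cdots+\bar a_{i-1})}$ which, combined with the unshuffle sign $(-1)^{\bar a_i(\bar a_0+\cdots+\bar a_{i-1})}$, leaves precisely $(-1)^{\bar f(\bar a_0+\cdots+\bar a_{i-1})}$. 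Verifying that the parity of $2k$ makes the $V$- and $A$-gradings interchangeable throughout is what guarantees no discrepancy; once the formula is in place with correct signs, the inductive verification in $(1)\Rightarrow(2)$ is routine.
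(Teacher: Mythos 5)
Your proof is correct and follows essentially the same route as the paper: compute $[f,\mu_1](a,b)=f(ab)-f(a)b-(-1)^{\bar f\bar a}af(b)$ to identify condition (3) with the derivation property, and extend to all $\mu_n$ by the iterated Leibniz rule. The only difference is that you spell out the general formula for $[f,\mu_n]$ and the sign bookkeeping, whereas the paper declares the implication $(1)\Rightarrow(2)$ ``easy and omitted''; your signs check out.
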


\begin{proof} For every $a,b\in A$ we have
\[ [f,\mu_1](a,b)=f(ab)-f(a)b-(-1)^{\bar{a}\;\bar{b}}f(b)a=f(ab)-f(a)b-(-1)^{\bar{a}\;\bar{f}}af(b)\]
and therefore $f$ is a derivation if and only if  $[f,\mu_1]=0$. 
The proof that if $f$ is a derivation then $[f,\mu_n]=0$ for every $n>0$ is easy and omitted.
\end{proof}

\begin{theorem}\label{thm.seventerms} For a linear map $f\in \Hom_{\K}^*(A,A)$ the following conditions are equivalent:
\begin{enumerate} 

\item \label{lem.seventerms1}  $f\in \Diff_2(A)$ and $f(1)=0$,

\item \label{lem.seventerms2}  $f$ satisfies the ``seven terms'' condition
\[f(abc)+f(a)bc+(-1)^{\bar{a}\;\bar{b}}f(b)ac+(-1)^{\bar{c}(\bar{a}+\bar{b})}f(c)ab=
f(ab)c+(-1)^{\bar{a}(\bar{b}+\bar{c})}f(bc)a+(-1)^{\bar{b}\bar{c}}f(ac)b,\]

\item \label{lem.seventerms3}  the bilinear form
$\Phi(a,b)=f(ab)-f(a)b-(-1)^{\bar{a}\bar{f}}a f(b)$ satisfies the Poisson identity
\[ \Phi(a,bc)=\Phi(a,b)c+(-1)^{(\bar{a}+\bar{f})\bar{b}}b\Phi(a,c),\]

\item \label{lem.seventerms4}  $[f,\mu_2]=[[f,\mu_1],\mu_1]$.
\end{enumerate}
\end{theorem}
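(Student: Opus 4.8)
The plan is to make condition~(3)---the Poisson identity for the bilinear defect $\Phi$---the hub, and to connect (1), (2) and (4) to it. The one computational input shared by all the arguments is that the computation in the preceding lemma already identifies $[f,\mu_1]=\Phi$ as an element of $D_1(A[2k])$; thus (3) and (4) are literally assertions about $\Phi=[f,\mu_1]$ and its bracket with $\mu_1$. At the start I would also record that desuspension by the \emph{even} integer $2k$ preserves parities, so that all Koszul signs may be computed with the original degrees in $A$; this removes the only conceptual worry about the suspension and leaves pure sign-bookkeeping.

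For (2)$\Leftrightarrow$(3) I would expand the Poisson defect
\[
P(a,b,c)=\Phi(a,bc)-\Phi(a,b)c-(-1)^{(\bar a+\bar f)\bar b}b\,\Phi(a,c)
\]
by inserting $\Phi(x,y)=f(xy)-f(x)y-(-1)^{\bar x\bar f}xf(y)$. The two copies of $f(a)bc$ cancel, and after reordering each surviving monomial into the normal form $f(\,\cdot\,)\cdots$ by graded commutativity, $P(a,b,c)$ turns out to be exactly the left-hand minus the right-hand side of the seven-terms identity. Hence $P\equiv 0$ is precisely condition~(2).

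For (2)$\Leftrightarrow$(4) I would compute the two coderivations in $D_2(A[2k])$ directly from the $\bullet$-formula. A short calculation gives $[f,\mu_2](a,b,c)=f(abc)-S$, where $S=f(a)bc+(-1)^{\bar a\bar b}f(b)ac+(-1)^{\bar c(\bar a+\bar b)}f(c)ab$; and, using $\Phi=[f,\mu_1]$, expanding $[\Phi,\mu_1]=\Phi\bullet\mu_1-\mu_1\bullet\Phi$ and substituting for $\Phi$ yields $[[f,\mu_1],\mu_1](a,b,c)=3f(abc)+S-2T$, where $T$ is the sum of the three ``double'' terms on the right of the seven-terms identity. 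Subtracting, $[f,\mu_2]-[[f,\mu_1],\mu_1]=-2\bigl(f(abc)+S-T\bigr)$ is $-2$ times the seven-terms difference, so in characteristic $0$ condition~(4) coincides with condition~(2).

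The conceptual heart is (1)$\Leftrightarrow$(3). Writing $L_x$ for left multiplication, the lemma's computation reads $[f,L_a]=\Phi_a+L_{f(a)}$ with $\Phi_a=\Phi(a,-)$; since multiplication operators graded-commute we have $[L_{f(a)},L_b]=0$, hence $[[f,L_a],L_b]=[\Phi_a,L_b]$. By the recursive definition of order, $f\in\Diff_2(A)$ says exactly that each such double commutator lies in $\Diff_0(A)$, i.e. is itself a multiplication operator. Evaluating $[\Phi_a,L_b]$ at $1$ and using $\Phi(a,1)=-(-1)^{\bar a\bar f}a\,f(1)$, I would note that the hypothesis $f(1)=0$ pins the only possible multiplier down to $\Phi(a,b)$, so membership in $\Diff_0(A)$ forces $[\Phi_a,L_b]=L_{\Phi(a,b)}$, which unwinds to the Poisson identity; this gives (1)$\Rightarrow$(3). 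Conversely the Poisson identity makes each $[\Phi_a,L_b]=L_{\Phi(a,b)}$ a multiplication operator, whence $f\in\Diff_2(A)$, while specializing the identity to $a=b=c=1$ gives $\Phi(1,1)=-f(1)=0$, recovering the missing boundary condition. The only real obstacle throughout is keeping the Koszul signs of the $\bullet$-product and the unshuffles straight; the parity remark above, together with a fixed monomial normal form, keeps this entirely mechanical, and no step uses more than the definitions recalled in the two preceding sections.
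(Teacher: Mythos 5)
Your proof is correct and follows essentially the same route as the paper's: the decomposition $[f,L_a]=\Phi(a,-)+L_{f(a)}$, the identification of the Poisson identity with $\Phi(a,-)$ being a derivation (giving $(3)\Leftrightarrow(1)$), and the expansion of iterated commutators with multiplication operators evaluated at $1$ are exactly the ingredients of the paper's argument, which merely organizes the implications as a cycle $(1)\Rightarrow(2)\Rightarrow(3)\Rightarrow(1)$ rather than using $(3)$ as a hub. The only substantive addition is that you actually carry out the $(2)\Leftrightarrow(4)$ computation (obtaining $[f,\mu_2]-[[f,\mu_1],\mu_1]=-2(\mathrm{LHS}-\mathrm{RHS})$, so that characteristic $0$ is used) where the paper dismisses it as ``tedious but straightforward''; your bookkeeping there, and the remark that the even shift $2k$ trivializes the suspension signs, are both correct.
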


\begin{proof} If $f\in \Diff_2(A)$ then $[[[f,a],b],c]=0$ 
for every $a,b,c\in A$ and if in addition $f(1)=0$ then also
\[
\strut[[[f,a],b],c](1)+f(1)abc=0
\]
for every $a,b,c\in A$. Expanding the above expression one finds the seven terms condition, hence  \eqref{lem.seventerms1} implies \eqref{lem.seventerms2}. That  \eqref{lem.seventerms2} implies \eqref{lem.seventerms3} is immediate. Next, the Poisson identity means that for every $a$ 
the operator $\Phi(a,-)$ is a derivation. Since $[f,a]=\Phi(a,-)+f(a)$, this implies that $[f,a]\in \Diff_1(A)$ for any $a$, and so $f\in \Diff_2(A)$. Moreover, by the Poisson identity again, $f(1)=-\Phi(1,1)=0$. This shows that \eqref{lem.seventerms3} implies \eqref{lem.seventerms1}. 
Finally, showing that \eqref{lem.seventerms4} is equivalent to \eqref{lem.seventerms2}
is tedious but straightforward. \end{proof}

\begin{definition} 
A linear map $f\colon A\to A$ will be called a 
\emph{quasi-Batalin-Vilkovisky} operator if satisfies any of the equivalent conditions of Theorem~\ref{thm.seventerms}.
\end{definition}

\begin{remark} 
The name quasi-Batalin-Vilkovisky operator is motivated from the fact 
\cite{koszul,getzler94} that a Batalin-Vilkovisky algebra may be defined as the data of a graded commutative algebra $A$ and a  quasi-BV operator $\Delta$ of  odd degree  such that $\Delta^2=0$.
\end{remark}

\bigskip
\section{Formality of Koszul brackets}

Throughout this section, $(A,d)$ will be a differential graded commutative algebra (with a differential $d$ of degree 1) over a field of characteristic 0 
and $\bi$ a quasi-BV operator on $A$ of \emph{even} degree $-2k$. We will write $\bl=[\bi,d]$; since $d$ is a derivation, also $\bl$ is a quasi-BV operator on $A$, of degree $-2k+1$.

\begin{lemma} In the notation above, assume that  also $[\bl,\bi]$ is a quasi-BV operator on $A$.
Let $R\in  \Hom^{-2k}_{\K}(A\odot A,A)$ and $Q\in \Hom^{-2k+1}_{\K}(A\odot A,A)$ be the 
bilinear operators defined respectively as
\[
R(a,b)=\bi(ab)-\bi(a)b-a\bi(b),
\]
and
\[
Q(a,b)=\bl(ab)-\bl(a)b-(-1)^{\bar{a}}a\bl(b).
\]
Then $Q=[R,d]$ and $[Q,R]=0$ in the graded Lie algebra $D(A[2k])$.
\end{lemma}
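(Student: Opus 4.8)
The plan is to verify the two claimed identities separately, treating $R$, $Q$, and the various operators as elements of the graded Lie algebra $D(A[2k])$ and exploiting the dictionary from Section~3 between quasi-BV operators and commutators with the multiplication maps $\mu_n$. The crucial observation is that, by the definitions given, the bilinear operators $R$ and $Q$ are precisely the ``defect of being a derivation'' for $\bi$ and $\bl$; in the language of Theorem~\ref{thm.seventerms}, $R=-[\bi,\mu_1]$ and $Q=-[\bl,\mu_1]$ up to the signs built into the decalage (the bracket $[\,f,\mu_1]$ measures exactly $f(ab)-f(a)b-(-1)^{\bar a\bar f}af(b)$). Thus the natural first step is to rewrite $R$ and $Q$ as commutators $R=-[\bi,\mu_1]$ and $Q=-[\bl,\mu_1]$ inside $D(A[2k])$, keeping careful track of the degree shift $2k$ that makes $\bi$ an even (degree $0$ after shift) operator and $\mu_1$ a degree $2k$ operator.

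For the identity $Q=[R,d]$, I would compute $[R,d]=-[[\bi,\mu_1],d]$ and apply the graded Jacobi identity. Since $d$ is a derivation, Lemma~2 (the derivation lemma) gives $[d,\mu_1]=0$, so the Jacobi identity collapses $[[\bi,\mu_1],d]$ to $[[\bi,d],\mu_1]=[\bl,\mu_1]=-Q$. This yields $Q=[R,d]$ essentially for free, with the only care needed being the sign bookkeeping coming from the degrees of $d$, $\bi$, and $\mu_1$ and from the decalage convention in \eqref{equ.brackettone}.

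For the harder identity $[Q,R]=0$, I would again pass to multiplication maps and write $[Q,R]=[[\bl,\mu_1],[\bi,\mu_1]]$. Expanding this double commutator via the graded Jacobi identity produces terms of the form $[[[\bl,\bi],\mu_1],\mu_1]$, $[[\bl,\mu_1],[\bi,\mu_1]]$ reorganized around $[\bl,\bi]$, together with terms involving $[\mu_1,\mu_1]$ and $[\bl,\bi]$. The point where the hypothesis enters is precisely here: since $\bi$ and $\bl$ are both quasi-BV operators, condition~\eqref{lem.seventerms4} of Theorem~\ref{thm.seventerms} gives $[\bi,\mu_2]=[[\bi,\mu_1],\mu_1]$ and likewise for $\bl$; and by hypothesis $[\bl,\bi]$ is \emph{also} quasi-BV, so $[[\bl,\bi],\mu_2]=[[[\bl,\bi],\mu_1],\mu_1]$. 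The strategy is to show that the quasi-BV condition on $[\bl,\bi]$ is exactly what forces the obstruction term $[[[\bl,\bi],\mu_1],\mu_1]$ appearing in the expansion of $[Q,R]$ to cancel against the remaining terms.

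The main obstacle I expect is the combinatorial sign and term management in this double-commutator expansion: one must repeatedly apply graded Jacobi, use $[\mu_1,\mu_1]=0$ and $[d,\mu_1]=0$, and recognize when the second-order (quasi-BV) conditions for $\bi$, $\bl$, and $[\bl,\bi]$ let a group of terms collapse. A clean way to organize this is to work entirely with the symmetric characterization: the vanishing $[Q,R]=0$ is equivalent to a symmetric trilinear identity among $\bi$ and $\bl$, which reduces, after using that $\bi$, $\bl$, and $[\bl,\bi]$ are each second-order, to the tautology $[[\bl,\bi],\mu_1]$ being first-order. Concretely, I anticipate that the identity $[Q,R]=0$ is equivalent, via Theorem~\ref{thm.seventerms}\eqref{lem.seventerms4}, to the statement that $[\bl,\bi]\in\Diff_2(A)$, so that the proof amounts to translating the hypothesis into the required bracket identity rather than discovering any new phenomenon; the labor is confined to verifying that the translation is faithful term-by-term.
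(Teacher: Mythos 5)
Your proposal follows essentially the same route as the paper: rewrite $R=[\bi,\mu_1]$ and $Q=[\bl,\mu_1]$, get $Q=[R,d]$ from $[d,\mu_1]=0$ and graded Jacobi, and obtain $[Q,R]=0$ by expanding $[[[\bl,\bi],\mu_1],\mu_1]-[[\bl,\bi],\mu_2]$ via Jacobi and the condition \eqref{lem.seventerms4} of Theorem~\ref{thm.seventerms} applied to $\bi$, $\bl$ and $[\bl,\bi]$ (the paper's computation gives exactly $2[Q,R]$ for this difference). The only slip is the extraneous minus signs in $R=-[\bi,\mu_1]$, $Q=-[\bl,\mu_1]$ (with the shift to $A[2k]$ both operators are of even degree, so no sign appears), which in any case does not affect either identity.
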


\begin{proof} 
It is immediate to check that in $D(A[2k])$ one has $Q=[R,d]$. Moreover, by definition, $R=[\bi,\mu_1]$ and $Q=[\bl,\mu_1]$. By assumption we have
\[
 [\bi,\mu_2]=[[\bi,\mu_1],\mu_1];\qquad [\bl,\mu_2]=[[\bl,\mu_1],\mu_1];\qquad [[\bl,\bi]\mu_2]=[[[\bl,\bi]\mu_1],\mu_1].
\]
The graded Jacobi identity gives
\[ [[[\bl,\bi],\mu_1],\mu_1]=[[[\bl,\mu_1],\bi],\mu_1]+[[\bl,[\bi,\mu_1]],\mu_1]=
[[[\bl,\mu_1],\mu_1],\bi]+2[[\bl,\mu_1],[\bi,\mu_1]]+[\bl,[[\bi,\mu_1],\mu_1]],\]
and
\[[[\bl,\bi],\mu_2]=[[\bl,\mu_2],\bi]+[\bl,[\bi,\mu_2]].\]
Therefore 
\[ 0=[[[\bl,\bi],\mu_1],\mu_1]-[[\bl,\bi],\mu_2]=2[[\bl,\mu_1],[\bi,\mu_1]]=2[Q,R].\]
\end{proof}

\begin{example}\label{example.poisson} Let $(A_X,d)$ be the de Rham complex of a manifold $X$. 
Given $\eta\in \Gamma(\bigwedge^p T_X)$ we have $\bi_\eta\in \Diff_p(A_X)$; moreover, 
$\bi_\eta\in \Diff_{p-1}(A_X)$ if and only if $\eta=0$. According to the formula
\[ [\bl_{\eta},\bi_{\beta}]=\bi_{[\eta,\beta]_{SN}}\] 
we have that, for  $\pi\in \Gamma(\bigwedge^2 T_X)$, the operators $\bi_{\pi}$ and 
$[\bl_{\pi},\bi_{\pi}]$ are quasi-BV if and only if 
$[\pi,\pi]_{SN}=0$, i.e. if and only if $\pi$ is a Poisson structure.
\end{example}

From Theorem \ref{thm.main-theorem} we therefore obtain
\begin{theorem}\label{thm.gerstenhaber}
In the notation above, assume that  also $[\bl,\bi]$ is a quasi-BV operator on $A$, 
and let $[\,,\,]_{\bl}$ be the degree $-2k+1$ bracket on $A$ defined by
\[
[a,b]_{\bl}=(-1)^{\bar{a}}(\bl(ab)-\bl(a)b)-a\bl(b).
\]
Then $(A,d,\cdot,[\,,\,]_{\bl})$ is a Gerstenhaber algebra, whose underlying DGLA $(A[2k-1],d,[\,,\,]_{\bl})$ is a homotopy abelian DGLA. If in addition $\bl^2=0$, then $(A,d,\cdot,[\,,\,]_{\bl},\bl)$ is a 
Batalin-Vilkovisky algebra.
\end{theorem}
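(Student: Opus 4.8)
The plan is to verify the three claimed assertions in turn, drawing on the machinery already assembled. The statement splits naturally into (i) that $(A,d,\cdot,[\,,\,]_{\bl})$ is a Gerstenhaber algebra, (ii) that the underlying DGLA is homotopy abelian, and (iii) the Batalin-Vilkovisky upgrade when $\bl^2=0$. I expect (ii) to be essentially free and (i) to carry the real computational weight.

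First I would dispatch the homotopy abelian claim, since it is immediate from what precedes. The preceding Lemma establishes that the bilinear operators $R(a,b)=\bi(ab)-\bi(a)b-a\bi(b)$ and $Q(a,b)=\bl(ab)-\bl(a)b-(-1)^{\bar a}a\bl(b)$ satisfy $Q=[R,d]$ and $[Q,R]=0$ in $D(A[2k])$. But by definition $[a,b]_{\bl}=(-1)^{\bar a}Q(a,b)$, which is exactly the decalage bracket of equation~\eqref{equ.brackettone} attached to $Q$. With $R\in\Hom^{-2k}_{\K}(A\odot A,A)$ viewed as a degree zero element of $D(A[2k])$ and $[R,[R,d]]=[R,Q]=0$, the hypotheses of Theorem~\ref{thm.main-theorem} are met verbatim. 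That theorem then yields that $(A[2k-1],d,[\,,\,]_{\bl})$ is a formal homotopy abelian DGLA, with $e^R$ furnishing the $L_\infty$-isomorphism to the abelian model. So (ii) requires only the observation that the present bracket coincides with the one in the criterion.

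Next I would treat the Gerstenhaber structure. A Gerstenhaber algebra is a graded commutative algebra together with a degree $-2k+1$ Lie bracket (after the shift $A[2k-1]$) that is a biderivation, i.e.\ satisfies the Poisson/Leibniz compatibility with the product. That $[\,,\,]_{\bl}$ defines a genuine graded Lie bracket---graded antisymmetry and the Jacobi identity---already follows from (ii): being isomorphic as an $L_\infty$-algebra to an abelian (hence strict) DGLA forces $[\,,\,]_{\bl}$ to satisfy the DGLA axioms, in particular Jacobi and $d$-compatibility. What remains is the Leibniz/Poisson rule relating the bracket to the product. The key input is that $\bl=[\bi,d]$ is itself a quasi-BV operator (stated at the start of the section), so by condition~\eqref{lem.seventerms3} of Theorem~\ref{thm.seventerms} the form $Q(a,b)=\bl(ab)-\bl(a)b-(-1)^{\bar a}a\bl(b)$ is exactly the bilinear defect $\Phi$ of $\bl$ and satisfies the Poisson identity $\Phi(a,bc)=\Phi(a,b)c+(-1)^{(\bar a+\bar\bl)\bar b}b\Phi(a,c)$. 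Translating through the sign $(-1)^{\bar a}$ and the decalage gives precisely the derivation property of $[a,-]_{\bl}$ over the product, which is the Gerstenhaber compatibility.

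Finally, for (iii), assume $\bl^2=0$. I would invoke the Remark following the definition of quasi-BV operator: a Batalin-Vilkovisky algebra is by definition a graded commutative algebra $A$ equipped with an odd-degree quasi-BV operator $\Delta$ with $\Delta^2=0$, whose generated bracket is the given one. Here $\bl$ has odd degree $-2k+1$, is quasi-BV by hypothesis, satisfies $\bl^2=0$ by assumption, and generates $[\,,\,]_{\bl}$ by construction. Hence $(A,d,\cdot,[\,,\,]_{\bl},\bl)$ is a Batalin-Vilkovisky algebra, once one also notes $d$ is a derivation of all the structure, which is automatic. The main obstacle throughout is (i): keeping the Koszul signs consistent when passing between the defect form $\Phi$ of Theorem~\ref{thm.seventerms}, the operator $Q$, and the shifted bracket $[\,,\,]_{\bl}$, so that the Poisson identity lands with exactly the signs required of a Gerstenhaber bracket on $A[2k-1]$.
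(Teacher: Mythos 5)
Your proposal is correct and follows essentially the same route as the paper: the homotopy abelian claim comes from the preceding Lemma ($Q=[R,d]$, $[Q,R]=0$) fed into Theorem~\ref{thm.main-theorem}, the Poisson identity comes from condition~\eqref{lem.seventerms3} of Theorem~\ref{thm.seventerms} applied to the quasi-BV operator $\bl$, and the BV statement is the definition recalled in the Remark. The paper's own proof is just the one-line observation about the Poisson identity, with the rest left implicit, so your version is simply a more explicit write-up of the same argument.
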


\begin{proof}
The only thing to be checked is the Poisson identity for the bracket $[\,,\,]_{\bl}$; by Theorem~\ref{thm.seventerms}, this is equivalent to saying that $\bl$ is a quasi-BV operator.  
\end{proof}

\begin{remark} 
An alternative proof of the above theorem can be given using the 
the results of \cite{BDA}, where it is (implicitely) proved  that the series of higher Koszul brackets gives  a morphism of graded Lie algebras and then commutes with adjoint actions; 
this is essentially the argument used in \cite{CS}.
\end{remark}

\begin{example}\label{ex.formalitypoissonmanifolds}
An immediate application of the above theorem is the following refined version of Theorem \ref{theorem.magri-koszul-is-formal}.
Let $X$ be a smooth manifold, $(A_X,d,\wedge)$ be its de Rham algebra, and $[\,,\,]_\pi$ the Koszul bracket induced by a Poisson bivector field $\pi$. By Example \ref{example.poisson}, the operator $\bi_{\pi}$ satisfies the hypothesis of Theorem \ref{thm.gerstenhaber} and so 
$(A_X,d,\wedge,[\,,\,]_\pi)$ is a Gerstenhaber algebra whose underlying DGLA $(A_X[1],d,[\,,\,]_\pi)$ is homotopy abelian.  
\end{example}

\begin{corollary}\label{cor.subcomplex}
In the hypothesis of Theorem~\ref{thm.gerstenhaber}, let $B$ a differential graded linear subspace of $A$ which is closed under the bilinear operator $R=[\bi,\mu_1]$. Then $(B[2k-1],d,[\,,\,]_{\bl})$ is a formal DGLA. 
\end{corollary}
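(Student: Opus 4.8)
The plan is to observe that the entire content of Theorem~\ref{thm.main-theorem} --- the isomorphism $e^R$ together with the identity $e^R d\, e^{-R}=d+Q$ --- is built solely out of the bilinear operator $R=[\bi,\mu_1]$ and the differential $d$, and never refers to the multiplication or to the operators $\bi,\bl$ individually. Since $B$ is assumed closed under both $d$ and $R$, everything in that theorem should descend verbatim from $A$ to $B$, and the corollary will follow by restriction.

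First I would pass to the coalgebra picture. Because $B$ is a graded subspace of $A$, the reduced symmetric coalgebra $\overline{S(B[2k])}$ is a subcoalgebra of $\overline{S(A[2k])}$. The hypothesis $d(B)\subseteq B$ says exactly that $d$, viewed as a coderivation in $D_0(A[2k])$, restricts to a coderivation of $\overline{S(B[2k])}$, while the hypothesis that $B$ is closed under $R$ says exactly that $R\in D_1(A[2k])$, viewed as a coderivation, also restricts: applied to a symmetric product of elements of $B[2k]$, the coderivation associated to $R$ contracts two factors through $R$ and leaves the rest, and all factors stay in $B[2k]$. Consequently the exponential $e^R$, being a formal series in the coderivation $R$, preserves $\overline{S(B[2k])}$ and restricts there to a coalgebra automorphism with inverse $e^{-R}$.

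Next I would transport the two DGLA structures. The bracket $Q=[R,d]$ is the commutator of two operators each preserving $\overline{S(B[2k])}$, hence $Q$ preserves it as well; equivalently the bilinear operator $Q$ satisfies $Q(B\odot B)\subseteq B$, so the bracket $[\,,\,]_{\bl}$ restricts and $(B[2k-1],d,[\,,\,]_{\bl})$ is a sub-DGLA of $(A[2k-1],d,[\,,\,]_{\bl})$; the trivial bracket restricts trivially, giving the abelian DGLA $(B[2k-1],d,0)$. Restricting the identity $e^R d\, e^{-R}=d+Q$ of Theorem~\ref{thm.main-theorem} to the subcoalgebra $\overline{S(B[2k])}$, I conclude that $e^R|_B$ is an $L_\infty$-isomorphism between the abelian DGLA $(B[2k-1],d,0)$ and $(B[2k-1],d,[\,,\,]_{\bl})$. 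As in the proof of Theorem~\ref{thm.main-theorem}, the two then have isomorphic Bar constructions and are quasi-isomorphic as DGLA; being isomorphic as an $L_\infty$-algebra to an abelian DGLA, the cohomology of $(B[2k-1],d,[\,,\,]_{\bl})$ carries the trivial induced bracket, so it is quasi-isomorphic to its cohomology, i.e.\ \emph{formal}.

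The only genuinely delicate point, and the reason the statement is not vacuous, is that $B$ is merely a differential graded subspace closed under $R$, and \emph{not} a subalgebra: neither the product, nor $\bi$, nor $\bl$ need map $B$ into itself, so none of these restrict to $B$ separately. What makes the argument go through is precisely that the bracket $[\,,\,]_{\bl}$ and the whole isomorphism $e^R$ are expressible through $R$ and $d$ alone, so closure under these two operations is all that is required. I would take particular care in phrasing the restriction of coderivations to $\overline{S(B[2k])}$, since that is exactly where the hypothesis on $B$ is used and where a careless argument might silently invoke the ambient multiplication.
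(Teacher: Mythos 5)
Your proposal is correct and follows exactly the paper's own argument: the paper's proof is the one-line observation that $\overline{S(B)}$ is preserved by $e^R$ (and by $d$), so the proof of Theorem~\ref{thm.main-theorem} restricts to $B$. You have simply expanded the same restriction argument, correctly identifying that closure under $d$ and $R$ alone suffices because the entire $L_\infty$-isomorphism is built from these two operators.
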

\begin{proof}
Since $B$ is closed under $R$, then $\overline{S(B)}$ it is preserved by $e^R$ and the proof of Theorem \ref{thm.main-theorem} applies. 
\end{proof}

\begin{example}
Let $A$ and $\bi$ as in Theorem \ref{thm.gerstenhaber}, then, for any $p_0\geq 2k$, the subcomplex $B=\bigoplus_{p\geq p_0}A^p$ satisfies the assumptions of Corollary \ref{cor.subcomplex}.
\end{example}

\begin{example}
Let $X$ be a symplectic manifold, and let $j\colon Y\hookrightarrow X$ be the inclusion of a Lagrangian submanifold. Then the differential ideal $B:=\ker j^*\subset A_X$ satisfies the assumptions of Corollary \ref{cor.subcomplex}; this immediately follows by the Lagrangian Neighborhood Theorem. In particular the Koszul bracket induces a homotopy abelian DGLA structure on the (shifted) complex of differential forms vanishing on $Y$.\end{example}

\begin{corollary} Let $A_{\bullet}$ be a cosimplicial commutative differential 
graded algebra and let  $\bi_{\bullet}\colon A_{\bullet}\to A_{\bullet}$ be a cosimplicial linear map such that
$\bi_n\colon A_n\to A_n$ satisfies the assumption of Theorem~\ref{thm.gerstenhaber} for every $n$. 
Then the totalization of the cosimplicial DGLA $(A_{\bullet}[2k-1],d_{\bullet},[\,,\,]_{{\bl}_{\bullet}})$ 
is a homotopy abelian differential graded Lie algebra. 
\end{corollary}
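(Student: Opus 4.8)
The plan is to show that the whole totalization is obtained, up to an $L_{\infty}$-quasi-isomorphism, from a \emph{cosimplicial abelian} DGLA, and then to invoke the homotopy invariance of the totalization functor. Concretely, at each cosimplicial level $n$ Theorem~\ref{thm.gerstenhaber} (via Theorem~\ref{thm.main-theorem}) tells us that the DGLA $(A_n[2k-1],d_n,[\,,\,]_{\bl_n})$ is homotopy abelian, and in fact the exponential $e^{R_n}$ of the coderivation associated to $R_n=[\bi_n,\mu_1]$ is an explicit $L_{\infty}$-isomorphism from the abelian DGLA $(A_n[2k-1],d_n,0)$ to $(A_n[2k-1],d_n,[\,,\,]_{\bl_n})$. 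So the first task is to upgrade this level-wise statement to a \emph{morphism of cosimplicial $L_{\infty}$-algebras}, and the second is to push it through the totalization.

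For the first step I would observe that, since $\bi_{\bullet}$ is by hypothesis a cosimplicial linear map and the products $\mu_1$ are part of the cosimplicial commutative algebra structure, each coface and codegeneracy operator commutes with $R_{\bullet}=[\bi_{\bullet},\mu_1]$; hence it commutes with the coalgebra automorphisms $e^{R_{\bullet}}$. In other words the family $\{e^{R_n}\}_{n\ge 0}$ is a morphism of cosimplicial $L_{\infty}$-algebras
\[
e^{R_{\bullet}}\colon (A_{\bullet}[2k-1],d_{\bullet},0)\longrightarrow (A_{\bullet}[2k-1],d_{\bullet},[\,,\,]_{\bl_{\bullet}}),
\]
which is a level-wise $L_{\infty}$-isomorphism, the source being a cosimplicial \emph{abelian} DGLA.

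For the second step I would apply the Thom--Whitney totalization, extended to $L_{\infty}$-algebras, to $e^{R_{\bullet}}$. The two facts I need here are standard for this construction (see \cite{FMM}): totalization is functorial with respect to $L_{\infty}$-morphisms and sends level-wise $L_{\infty}$-quasi-isomorphisms to $L_{\infty}$-quasi-isomorphisms; and the totalization of a cosimplicial abelian DGLA is again abelian, because the induced bracket on $\TW$ is assembled from the level-wise brackets (all zero) and the commutative product of polynomial forms on the simplices. Combining these, $\TW(e^{R_{\bullet}})$ is an $L_{\infty}$-quasi-isomorphism from the abelian DGLA $\TW(A_{\bullet}[2k-1],d_{\bullet},0)$ to $\TW(A_{\bullet}[2k-1],d_{\bullet},[\,,\,]_{\bl_{\bullet}})$, and via the Bar--Cobar correspondence this is a genuine quasi-isomorphism of DGLAs; therefore the target is homotopy abelian, as claimed.

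The routine verifications (commutation of the cosimplicial operators with $R_{\bullet}$ and $e^{R_{\bullet}}$, and the vanishing of the totalized bracket in the abelian case) are immediate. The real point, and the only place where external machinery is needed, is the homotopy invariance and $L_{\infty}$-functoriality of the Thom--Whitney totalization: one must know that this construction carries a level-wise $L_{\infty}$-(quasi-)isomorphism of cosimplicial $L_{\infty}$-algebras to an $L_{\infty}$-(quasi-)isomorphism of the totalizations. I expect this to be the main obstacle, and I would address it by quoting the corresponding results on the totalization functor rather than reproving them.
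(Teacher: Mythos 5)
Your argument is correct, but it follows a genuinely different route from the paper's. You totalize the level\emph{wise} $L_{\infty}$-isomorphisms: you check (correctly -- the computation $f(R_n(a,b))=R_m(f(a),f(b))$ for an algebra morphism $f$ commuting with $\bi_{\bullet}$ is exactly what is needed) that $e^{R_{\bullet}}$ is a morphism of cosimplicial $L_{\infty}$-algebras out of the cosimplicial abelian DGLA, and then you invoke the extension of the Thom--Whitney totalization to $L_{\infty}$-morphisms together with its homotopy invariance, both of which are indeed available in \cite{FMM}. The paper instead avoids all of this machinery by working one level up: $\TW(A_{\bullet})$ is itself a differential graded commutative algebra, sitting inside $\prod_n A_n\otimes\Omega_n$ with $\Omega_n$ the polynomial forms on the $n$-simplex, and since differential operators of order $\le 2$ are stable under scalar extension, the operators $\bi_n\otimes\mathrm{Id}$ (which preserve the totalization because $\bi_{\bullet}$ is cosimplicial) assemble into a single quasi-BV operator $\bi$ on $\TW(A_{\bullet})$ with $[[\bi,d],\bi]$ again quasi-BV; Theorem~\ref{thm.gerstenhaber} then applies directly to the DGCA $\TW(A_{\bullet})$, whose induced Koszul bracket is the totalized bracket $[\,,\,]_{\bl_{\bullet}}$. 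The paper's route buys economy: it only uses $\TW$ as a functor on strict morphisms and reuses Theorem~\ref{thm.gerstenhaber} verbatim. Your route buys a clean structural statement (an explicit cosimplicial $L_{\infty}$-isomorphism from an abelian object), at the cost of quoting the $L_{\infty}$-functoriality and the preservation of levelwise quasi-isomorphisms by $\TW$ -- which, as you correctly identify, is the one piece of external machinery your argument genuinely depends on.
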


\begin{proof} Immediate consequence of the definition of totalization, see e.g. 
\cite{BGNT,FMcone,FMM, hinichdescent,IM10}, and the fact that differential operators of order $\le 2$ are 
stable under scalar extension.
\end{proof}

\bigskip
\section{An application to deformations of holomorphic Poisson manifolds}
\label{sec.deformationpoisson}

In this section we will denote by $X$ a compact complex manifolds, by $\Theta_X$ and $\Omega^1_X$ the sheaves of holomorphic vector fields and holomorphic  1-forms respectively, by $A_X^{p,q}$ the space of differentiable forms of type  $(p,q)$ and by $H^*_{dR}(X,\C)$ the de Rham cohomology of $X$.

A holomorphic Poisson structure on a complex manifold $X$ 
is a holomorphic tangent bivector field $\pi\in H^0(X,\bigwedge^2 \Theta_X)$ such that
$[\pi,\pi]_{SN}=0$. As in the differentiable case the Poisson structure induce 
both a Koszul bracket
\[ [\,,\,]_{\pi}\colon  A^{p,q}_X\times A^{r,s}_X\to A^{p+r-1,q+s}_X\]
and an anchor map $\pi^{\#}\colon \Omega^1_X\to \Theta_X$ 
which is a morphism of sheaves of Lie algebras.  

Denoting by 
$F^0_X\supset F^1_X\supset\cdots$ the Hodge filtration:
\[ F^i_X=\bigoplus_{p\ge i,q} A^{p,q}_X,\]
we have, by previous results that the DGLA  
$(F^0_X[1], d,[,]_{\pi})$ is quasi-isomorphic to an abelian DGLA and 
$(F^1_X[1], d,[,]_{\pi})$ is 
a differential graded Lie subalgebra. This is not sufficient to ensure the formality of 
$F^1_X[1]$, however we have:

\begin{lemma}\label{lem.abelianitafuno} Assume that the inclusion $F^1_X\hookrightarrow F^0_X$ is injective in cohomology (e.g. if $X$ is K\"{a}hler), then the DGLA $(F^1_X[1], d,[,]_{\pi})$ is quasi-isomorphic to an abelian DGLA.
\end{lemma}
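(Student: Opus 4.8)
The plan is to exhibit an $L_\infty$-morphism from $(F^1_X[1],d,[\,,\,]_\pi)$ to an abelian DGLA whose linear part is injective in cohomology, and then to invoke the general principle that the existence of such a morphism forces the source DGLA to be homotopy abelian. The Hodge-theoretic hypothesis will enter only to guarantee this injectivity, compensating for the fact that $F^1_X$ is \emph{not} stable under the operator $R$ (so that Corollary~\ref{cor.subcomplex} does not apply directly).

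To build the morphism, I first recall from Theorem~\ref{thm.main-theorem}, applied as in Example~\ref{ex.formalitypoissonmanifolds} with $R=[\bi_\pi,\mu_1]$, that $e^{R}$ is an $L_\infty$-isomorphism $(A_X[1],d,0)\to(A_X[1],d,[\,,\,]_\pi)$; its inverse $e^{-R}$ is therefore an $L_\infty$-isomorphism from $(F^0_X[1],d,[\,,\,]_\pi)=(A_X[1],d,[\,,\,]_\pi)$ to the abelian DGLA $(A_X[1],d,0)$. Since $R$ strictly increases the tensor arity, the linear component of $e^{-R}$ is the identity of $A_X$. The inclusion $\iota\colon F^1_X[1]\hookrightarrow A_X[1]$ is a strict morphism of DGLAs, because $F^1_X$ is preserved both by $d$ and by $[\,,\,]_\pi$. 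Composing, I obtain an $L_\infty$-morphism
\[
\phi=e^{-R}\circ\iota\colon (F^1_X[1],d,[\,,\,]_\pi)\longrightarrow (A_X[1],d,0)
\]
to an abelian DGLA, whose linear part is $\phi_1=\iota$.

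The map induced on cohomology by $\phi_1$ is then exactly the inclusion $F^1_X\hookrightarrow F^0_X$ in cohomology, which is injective by hypothesis. It remains to apply the principle: if $\phi\colon L\to M$ is an $L_\infty$-morphism to a homotopy abelian DGLA $M$ and $H^*(\phi_1)$ is injective, then $L$ is homotopy abelian. I would prove this by choosing minimal models $(H^*(L),\ell_2,\ell_3,\dots)$ of $L$ and $(H^*(M),0)$ of $M$ and transporting $\phi$ to an $L_\infty$-morphism between them whose linear part is the injective map $H^*(\phi_1)$. Since the target is abelian, the $L_\infty$-relation in arity $n$ reduces to $H^*(\phi_1)\circ\ell_n=0$ once one knows $\ell_j=0$ for $2\le j<n$ (these vanishings kill every remaining term); injectivity then yields $\ell_n=0$, and by induction all higher brackets vanish. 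Hence the minimal model of $(F^1_X[1],d,[\,,\,]_\pi)$ has zero brackets, which is precisely the assertion.

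I expect the genuine work to lie in this last principle rather than in the construction of $\phi$: one must verify carefully that, against an abelian minimal target, the $L_\infty$-morphism equations in each arity collapse to $H^*(\phi_1)\circ\ell_n=0$, so that injectivity of $H^*(\phi_1)$ propagates the vanishing of the higher brackets inductively. The Hodge hypothesis is indispensable here, since $R$ does not preserve $F^1_X$: for $\alpha,\beta\in A^{1,0}_X$ one has $\bi_\pi(\alpha)=\bi_\pi(\beta)=0$, so $R(\alpha,\beta)=\bi_\pi(\alpha\wedge\beta)\in A^{0,0}_X\subset F^0_X\setminus F^1_X$. Thus $e^{-R}$ fails to restrict to $F^1_X$, and injectivity in cohomology is exactly what substitutes for the missing $R$-stability.
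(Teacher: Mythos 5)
Your proof is correct, and its overall strategy coincides with the paper's: both reduce the lemma to the principle that a morphism into a homotopy abelian DGLA which is injective in cohomology forces the source to be homotopy abelian, applied to the inclusion $F^1_X\hookrightarrow F^0_X$. There are two differences. First, you precompose with $e^{-R}$ so as to land in the strictly abelian $(A_X[1],d,0)$; the paper skips this and applies the principle directly to the strict DGLA inclusion into $(F^0_X[1],d,[\,,\,]_\pi)$, already known to be homotopy abelian. Your extra step is harmless but obliges you to state the principle for $L_\infty$-morphisms rather than strict ones. Second, and more substantively, the proofs of the principle differ: the paper forms the homotopy fiber product $\mathfrak{l}$ of $\mathfrak{g}$ with a zigzag $\mathfrak{h}\xleftarrow{\sim}\mathfrak{k}\xrightarrow{\sim}V$, observes that $\mathfrak{l}\to\mathfrak{g}$ is a quasi-isomorphism and hence $\mathfrak{l}\to\mathfrak{k}$ is injective in cohomology, and concludes that the composite $\mathfrak{l}\to V\to W$ onto the image in cohomology is a quasi-isomorphism, yielding the zigzag $\mathfrak{g}\xleftarrow{\sim}\mathfrak{l}\xrightarrow{\sim}W$ with $W$ abelian. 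You instead pass to minimal models and run the induction in which the arity-$n$ morphism relation, against an abelian minimal target and with $\ell_j=0$ for $2\le j<n$, collapses to $H^*(\phi_1)\circ\ell_n=0$; this is a correct and standard argument. Your route is more explicit and self-contained (it needs only the existence of minimal models in characteristic zero), while the paper's is shorter once one grants the homotopy-pullback machinery. Your closing observation that $R$ fails to preserve $F^1_X$ (e.g.\ $R(\alpha,\beta)=\bi_\pi(\alpha\wedge\beta)\in A^{0,0}_X$ for $\alpha,\beta\in A^{1,0}_X$), so that Corollary~\ref{cor.subcomplex} is unavailable, correctly identifies why the cohomological injectivity hypothesis is what does the work here.
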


\begin{proof} This is an easy consequence of the homotopy classification 
of DGLAs and $L_{\infty}$-algebras \cite{K}. Indeed, let $f\colon\mathfrak{g}\to\mathfrak{h}$ be a dgla morphism, with $H^*(f)$ injective and $\mathfrak{h}$ quasi-abelian. Then, by the fact that ${\mathfrak h}$ quasi-abelian, we have a zigzag of quasi-isomorphisms of DGLAs $\mathfrak{h}\xleftarrow{\sim}\mathfrak{k}\xrightarrow{\sim}V$, with $V$ a graded vector space (considered as a DGLA with trivial differential and bracket). Let the DGLA $\mathfrak{l}$ be the homotopy fiber product of $\mathfrak{g}$ with $\mathfrak{k}$ over $\mathfrak{h}$, let the graded vector space $W$ be the image of  $H^*(\mathfrak{l})\to H^*(\mathfrak{k})\cong H^*(V)=V$, and let $\pi_W\colon V\to W$ be a graded linear projection. Then we 
have a homotopy commutative diagram of DGLAs
\par
\[
\xy (0,0)*{{\mathfrak g}\,} ; (15,-6)*+{{\mathfrak h}} **\dir{-}
?>* \dir{>}
,(30,0)*+{{\mathfrak k}} ; (15,-6)*+{{\mathfrak h}} **\dir{-}
?>* \dir{>}
,(30,0)*+{{\mathfrak k}} ; (45,-6)*+{V} **\dir{-}
?>* \dir{>}
,(15,6)*+{{\mathfrak l}} ; (0,0)*+{{\mathfrak g}\,} **\dir{-}
?>* \dir{>}
,(15,6)*+{{\mathfrak l}} ; (30,0)*+{{\mathfrak k}} **\dir{-}
?>* \dir{>}
,(45,-6)*+{V} ; (60,-12)*+{W} **\dir{-}
?>* \dir{>}
,(24,-.6);(21,-3)**\dir{~}
,(39,-2.2);(36,-1.2)**\dir{~}
,(9,5.4);(6,3)**\dir{~}
,(8,-1)*{\scriptstyle{f}}
 \endxy
\]
whose homotopy commutative square on the left is a homotopy pullback. In particular, the morphism $\mathfrak{l}\to\mathfrak{g}$ is a quasi-isomorphism, and so the morphism $\mathfrak{l}\to\mathfrak{k}$ is injective in cohomology. Therefore, the composition  ${\mathfrak l}\to W$ is a quasi-isomorphism and we have the zigzag of quasi-isomorphisms $\mathfrak{g}\xleftarrow{\sim}\mathfrak{l}\xrightarrow{\sim}W$, with $W$ abelian.
\end{proof}

If we are interested to obstructions of lifting Maurer-Cartan elements, then the assumption of the Lemma~\ref{lem.abelianitafuno} can be relaxed. Denoting by $\Art$ the category of local Artinian $\C$-algebras, for any DGLA $L$  the associated deformation functor $\Def_L\colon \Art\to \Set$ 
is defined as: 
\[ \Def_{L}(C)=\frac{\{x\in L^1\otimes\mathfrak{m}_C\mid dx+\frac{1}{2}[x,x]=0\}}{\text{gauge equivalence}},\]   
where $\mathfrak{m}_C$ is the maximal ideal of $C$.
Among the basic facts about DGLA and associated deformation functors we have 
(see e.g. \cite{manRENDICONTI,manettiseattle} for proofs and more details):
\begin{enumerate}

\item quasi-isomorphic DGLAs have isomorphic associated deformation functors;

\item abelian DGLAs have unobstructed associated deformation functors;

\item if $L\to M$ is a morphism of DGLAs and $\Def_M$ is unobstructed, 
then the obstructions of $\Def_L$ are contained in the kernel of $H^2(L)\to H^2(M)$. 
\end{enumerate}

\begin{lemma}\label{lem.Foneunobstructed} Assume that the natural map 
$H^2_{dR}(X,\mathbb{C})\to H^2(X,\Oh_X)$ is surjective. Then the functor
$\Def_{F^1[1]}$ is unobstructed.
\end{lemma}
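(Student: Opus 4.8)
The plan is to realize the problem as an instance of the standard obstruction calculus recalled just above, applied to the inclusion of differential graded Lie algebras
$(F^1_X[1],d,[\,,\,]_\pi)\hookrightarrow (F^0_X[1],d,[\,,\,]_\pi)$. First I would record that, since $F^0_X=A_X$, by the results already established the target $(F^0_X[1],d,[\,,\,]_\pi)$ is quasi-isomorphic to an abelian DGLA; hence by facts (1) and (2) above its deformation functor $\Def_{F^0[1]}$ is unobstructed. Applying fact (3) to the inclusion morphism, the obstructions of $\Def_{F^1[1]}$ are then forced to lie in the kernel of the map induced in second cohomology, namely $H^2(F^1_X[1])\to H^2(F^0_X[1])$, and it will suffice to show that this kernel vanishes.

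The next step is to identify these cohomology groups in purely de Rham-theoretic terms. Unwinding the suspension (recall $V[1]^i=V^{i+1}$) gives $H^2(F^1_X[1])=H^3(F^1_X,d)$ and $H^2(F^0_X[1])=H^3(A_X,d)=H^3_{dR}(X,\C)$, and the map in question is the one induced by the inclusion $F^1_X\hookrightarrow A_X$ of de Rham complexes. To compute its kernel I would use the short exact sequence of complexes $0\to F^1_X\to A_X\to A_X/F^1_X\to 0$. The quotient is the complex $\bigoplus_q A^{0,q}_X$ with induced differential $\debar$, since the $\partial$-component of $d$ maps into $F^1_X$; by Dolbeault's theorem its cohomology is $H^*(X,\Oh_X)$, and the projection $H^*_{dR}(X,\C)\to H^*(X,\Oh_X)$ it induces is exactly the natural map appearing in the hypothesis.

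Passing to the associated long exact sequence and looking around degree three,
\[
H^2_{dR}(X,\C)\xrightarrow{\,r\,}H^2(X,\Oh_X)\xrightarrow{\,\delta\,}H^3(F^1_X)\xrightarrow{\,\iota\,}H^3_{dR}(X,\C),
\]
the hypothesis that $r$ is surjective gives, by exactness, $\ker\delta=\Image r=H^2(X,\Oh_X)$, whence $\delta=0$; therefore $\ker\iota=\Image\delta=0$, i.e.\ $\iota$ is injective. Consequently the obstruction space $\ker\bigl(H^2(F^1_X[1])\to H^2(F^0_X[1])\bigr)=\ker\iota$ vanishes, and $\Def_{F^1[1]}$ is unobstructed.

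The point requiring the most care is the bookkeeping that isolates the obstruction in a de Rham group: one must track the degree shift $[1]$ correctly, verify that the induced differential on the quotient complex really is $\debar$ so that its cohomology is $H^*(X,\Oh_X)$, and check that the comparison map coincides with the one in the statement. Once these identifications are in place the argument is a direct diagram chase in the long exact sequence; the Koszul bracket $[\,,\,]_\pi$ plays no further role, since facts (1)--(3) reduce the entire question to the underlying cochain complexes and the natural map between their cohomologies.
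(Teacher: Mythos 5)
Your proposal is correct and follows essentially the same route as the paper: reduce to injectivity of $H^2(F^1_X[1])\to H^2(F^0_X[1])$ via the unobstructedness of $\Def_{F^0_X[1]}$ and the standard obstruction comparison, then deduce that injectivity from the long exact sequence of $0\to F^1_X\to A_X\to A_X/F^1_X\to 0$ together with the surjectivity hypothesis. Your identification of the quotient with the Dolbeault complex of $\Oh_X$ is exactly the implicit step in the paper's displayed exact sequence, just spelled out more explicitly.
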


\begin{proof} Since $F^0_X[1]$ is quasi-isomorphic to an abelian DGLA, the functor $\Def_{F^0_X[1]}$ is unobstructed and therefore the obstructions of $\Def_{F^1_X[1]}$ are contained in the kernel of 
$H^2(F^1_X[1])\to H^2(F^0_X[1])$. Now
the exact sequence
\[\xymatrix{H^2(F^1_X)\ar[r]&H^2(F^0_X)\ar@{=}[d]\ar[r]&H^2(F_X^0/F_X^1)\ar[d]^{\cong}\ar[r]
&H^2(F^1_X[1])\ar[r]&H^2(F^0_X[1])\\
&H^2_{dR}(X,\mathbb{C})\ar[r]&H^2(X,\Oh_X)&&}
\]
implies that $H^2(F^1_X[1])\to H^2(F^0_X[1])$ is injective.  
\end{proof}

\begin{theorem}\label{thm.deformationpoisson}  Let $\pi$ be a holomorphic Poisson structure on a compact complex  
manifold $X$ such that the natural map $H^2_{dR}(X,\mathbb{C})\to H^2(X,\Oh_X)$ is surjective.
Then for every closed $(1,1)$ form $\omega$, the class $[\pi^{\#}(\omega)]\in H^1(X,\Theta_X)$ is tangent to a deformation of $X$ over a smooth basis.
\end{theorem}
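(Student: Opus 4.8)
The plan is to realize $[\pi^{\#}(\omega)]$ as the image under the anchor map of a Maurer-Cartan element living in the DGLA $(F^1_X[1],d,[,]_\pi)$, and then invoke Lemma~\ref{lem.Foneunobstructed} to conclude that this element extends to an honest deformation over a smooth base. First I would observe that a closed $(1,1)$-form $\omega$ is a degree-one element of $F^1_X$, hence a degree-zero element of $F^1_X[1]$; since $d\omega=0$ it is in particular a (trivial, because linear-in-the-parameter) solution of the Maurer-Cartan equation in $F^1_X[1]\otimes\mathfrak{m}_C$ for $C=\K[t]/(t^2)$. Thus $\omega$ defines a tangent vector to $\Def_{F^1_X[1]}$.

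Next I would bring in the anchor map. The key structural input, recalled in Section~\ref{sec.deformationpoisson}, is that $\pi^{\#}\colon\Omega^1_X\to\Theta_X$ is a morphism of sheaves of Lie algebras intertwining the Koszul bracket on forms with the usual bracket on vector fields. This should promote to a morphism of DGLAs from $(F^1_X[1],d,[,]_\pi)$ (or an appropriate Dolbeault resolution thereof computing $H^*(X,\Omega^\bullet_X)$) to the Kodaira-Spencer DGLA $(A^{0,*}_X(\Theta_X),\debar,[,])$ governing deformations of the complex structure of $X$. Under the induced map on cohomology, the class of $\omega$ is sent precisely to $[\pi^{\#}(\omega)]\in H^1(X,\Theta_X)$. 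Therefore $[\pi^{\#}(\omega)]$ is the image of a first-order deformation coming from $\Def_{F^1_X[1]}$.

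Now the deformation-theoretic mechanism closes the argument. By Lemma~\ref{lem.Foneunobstructed}, under the surjectivity hypothesis on $H^2_{dR}(X,\C)\to H^2(X,\Oh_X)$ the functor $\Def_{F^1_X[1]}$ is unobstructed, so the first-order deformation $\omega$ lifts to a formal family, i.e.\ to a Maurer-Cartan element over a smooth (indeed a power-series) base. Applying the DGLA morphism $\pi^{\#}$ transports this smooth family to a Maurer-Cartan element in the Kodaira-Spencer DGLA over the same smooth base, whose first-order term is $[\pi^{\#}(\omega)]$. This is exactly the statement that $[\pi^{\#}(\omega)]$ is tangent to a deformation of $X$ over a smooth basis, and finally one passes from the formal family to a genuine deformation via the compactness of $X$ (e.g.\ by convergence/algebraization, or by appealing directly to the smoothness of the deformation functor which guarantees a representable smooth family).

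The main obstacle I expect is the careful construction of $\pi^{\#}$ as a genuine \emph{morphism of DGLAs} at the level of the global Dolbeault-type resolutions, rather than merely as a sheaf map or a map on cohomology. One must check that $\pi^{\#}$ commutes with the relevant differentials ($d$ versus $\debar$ once one restricts to the holomorphic/Hodge-filtered piece) and genuinely intertwines the Koszul bracket with the Schouten/commutator bracket after resolving, so that the formal functoriality of $\Def$ applies. A secondary, more routine point is identifying $H^1$ of the source complex with $H^1(X,\Theta_X)$ via $\pi^{\#}$ and verifying that the image of $[\omega]$ is indeed $[\pi^{\#}(\omega)]$; this is essentially bookkeeping with the Hodge filtration and the definition of the anchor map, but it is where sign and degree conventions must be handled with care.
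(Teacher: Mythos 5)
Your proposal is correct and follows essentially the same route as the paper: realize $\omega$ as a Maurer--Cartan element of $(F^1_X[1],d,[\,,\,]_\pi)$, use the unobstructedness from Lemma~\ref{lem.Foneunobstructed} to lift it formally, and push forward along the DGLA morphism $F^1_X[1]\to A^{0,*}_X(\Omega^1_X)\to A^{0,*}_X(\Theta_X)$ induced by the holomorphic anchor map, finally invoking the semiuniversal deformation of the compact $X$ and Artin's theorem to pass from formal to actual deformations. The ``obstacle'' you flag is handled in the paper exactly as you suggest, by composing the projection onto the Dolbeault resolution with $\pi^{\#}$.
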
 

\begin{proof} Since $X$ is compact, it has a semiuniversal deformation; according to Artin's theorem on the solution of analytic equations \cite{Artin68} 
it is sufficient to prove that the class of $\pi^{\#}(\omega)$ extends to a formal deformation over $\C[[t]]$.
The anchor map $\pi^{\#}$, being holomorphic, extends to a morphism of differential graded Lie algebras
\[ \gamma\colon F^1_X[1]\xrightarrow{\text{projection}}A_X^{0,*}
(\Omega^1_X)\xrightarrow{\pi^{\#}}A_X^{0,*}
(\Theta_X).\]
The DGLA  $A_X^{0,*}(\Theta_X)$ is the Kodaira-Spencer algebra of $X$ and its associated deformation functor is isomorphic to the functor of infinitesimal deformations of $X$, see \cite{CCK} and references therein. According to Lemma~\ref{lem.Foneunobstructed} the functor $\Def_{F^1_X[1]}$ is unobstructed and therefore the class
\[
[\omega]\in H^1(F^1_X[1])\cong\Def_{F^1_X[1]}\left(\frac{\C[t]}{(t^2)}\right)\]
extend to an element of $\Def_{F^1_X[1]}(\C[[t]])$. This implies in particular that 
$\gamma([\omega])=[\pi^{\#}(\omega)]$ extends to a deformation  of $X$
over $\C[[t]]$.
\end{proof}

\begin{remark} Theorem~\ref{thm.deformationpoisson} has been recently proved by Hitchin \cite{hitchin} under the  assumption that either $X$ is K\"{a}hler or $H^2(X,\Oh_X)=0$.
The proof of Theorem~\ref{thm.deformationpoisson} also shows that the assumption $\de\omega=\debar \omega=0$ can be replaced by the existence of a form $\eta\in A^{2,0}_X$ such that 
$\de\eta=0$, $\debar\eta=\de\omega$ and $\debar\omega=0$: in fact, since
$\gamma(\omega)=\gamma(\omega-\eta)$ it is sufficient to consider the cohomology class
$[\omega-\eta]\in H^1(F^1_X[1])$ as a Maurer-Cartan element.
\end{remark}

\end{document}